\theoremstyle{definition}
\newtheorem{definition}{Definition}
\theoremstyle{theorem}
\newtheorem{proposition}[definition]{Proposition}
\newtheorem{lemma}[definition]{Lemma}
\newtheorem{corollary}[definition]{Corollary}
\numberwithin{equation}{section}
\numberwithin{definition}{section}
\theoremstyle{remark}
\newtheorem{remark}[definition]{Remark}
\newtheorem{example}[definition]{Example}
\def\PP{\mathsf{P}}
\def\QQ{\mathsf{Q}}
\def\EE{\mathsf{E}}
\def\FF{\mathcal{F}}
\def\TT{\mathcal{T}}
\def\RR{\mathcal{R}}
\def\AA{\mathcal{A}}
\def\BB{\mathcal{B}}
\def\supp{\mathrm{supp}}
\def\tT{\tilde{T}}
\def\oF{\overline{F}}
\def\id{\mathrm{id}}
\def\Exp{\mathrm{Exp}}
\def\dd{\mathrm{d}}
\def\stg{\geq_{\mathrm{st}}}
\def\stl{\leq_{\mathrm{st}}}
\begin{document}
\title[Laws universally ordered under stochastic restart]{On laws exhibiting universal ordering under stochastic restart}
\begin{abstract}
For each of (i) arbitrary stochastic reset, (ii) deterministic reset with arbitrary period, (iii) reset at arbitrary constant rate, and then in the sense of either (a) first-order stochastic dominance or (b) expectation (i.e. for each of the six possible combinations of the preceding), those laws of random times are precisely characterized that are  rendered no bigger [rendered no smaller; left invariant] by all possible restart laws (within the classes (i), (ii), (iii), as the case may be). Partial results in the same vein  for reset with branching are obtained. In particular it is found that deterministic and arbitrary stochastic restart lead to the same characterizations,  but this equivalence fails to persist for exponential (constant-rate) reset. 
\end{abstract}
\author{Matija Vidmar}
\address{Department of Mathematics, University of Ljubljana and Institute of Mathematics, Physics and Mechanics, Slovenia}
\email{matija.vidmar@fmf.uni-lj.si}
\keywords{Stochastic restart; reset search; branching; reliability; first-order stochastic dominance; new better than old distributions}
\subjclass[2010]{60E15, 62E10}
\thanks{Financial support from the Slovenian Research Agency is acknowledged (programme No. P1-0402). I thank two anonymous Referees whose comments and suggestions have helped to improve the presentation of this paper.
}
\maketitle
\section{Introduction}
\subsection{Problem delineation}\label{subsection:problem}
Let $\TT$ and $\RR$ be two given probability laws on the Borel  subsets of $[0,\infty]$. Suppose some time-to-completion (resp. time-to-failure) of a process has, ex-ante,  law $\TT$. Imagine further that --- with a view to minimizing (resp. maximizing) this time --- rather than letting the process just run its course, we reset the process at a (random, independent) time distributed according to the law $\RR$, starting an independent copy of the process thereafter, and that then we \emph{repeat this over and over again} until completion (resp. failure).  The time to the latter now has a different, ex-post, law, $\TT^\RR$, say. In (almost) precise terms: if $(T_i)_{i\in \mathbb{N}}$ is a sequence of independent  with law $\TT$ distributed random variables, and $(R_k)_{k\in \mathbb{N}}$ is an independent sequence of independent with law $\RR$ distributed random variables, then $\TT^\RR$ is the law of the random time $\tT$ given by:
 \begin{equation}\label{eq:reset}
 \tT=R_1+\cdots+R_{k-1}+T_k\text{ on }\{R_1<T_1,\ldots,R_{k-1}<T_{k-1},T_k\leq R_k\},\quad k\in \mathbb{N}
 \end{equation} (a.s., under certain assumptions on $\RR$ and $\TT$; we make it fully precise below).  In less formal paralance: the distribution of  $\tT$ solves the  equation (we \emph{write} it with random variables, but we \emph{mean} it in distribution):
 \begin{equation}\label{eq:reset-informal}
 \tT=
 \begin{cases}
 T & \text{ if $T\leq R$}\\
 R+\tT'& \text{ if $R<T$}
 \end{cases},
 \end{equation}
 where $R$, $T$, $\tT'$ are independent, $R$ has law $\RR$, $T$ has law $\TT$, and the law of $\tT'$ is that of $\tT$. Of special interest are the cases when $\RR$ corresponds to: (i) resetting the process at deterministic epochs that are a fixed amount of time $r\in (0,\infty)$ apart (viz. $\RR=\delta_r$, the Dirac mass at $r$); (ii) restarting the process at a constant rate $\lambda\in (0,\infty)$ (viz. $\RR=\Exp(\lambda)$, the exponential law of mean $\lambda^{-1}$). 
 
Now, depending on the particulars of $\RR$ and $\TT$, $\TT^\RR$ may, or may not be smaller (resp. bigger) than $\TT$. Here smaller (resp. bigger) is meant in either of the two natural senses: in first-order stochastic dominance;  in mean. A natural qualitative question arises: \emph{which are the laws $\TT$ that are always rendered (a) no bigger (resp. (b) no smaller), or that are  left (c) invariant --- in first-order stochastic dominance or just in mean, as the case may be --- under restart (arbitrary, deterministic, of constant rate)}? Indeed if a distribution falls under case (a) (resp. (b)), then (and only then) we are assured that restart can do no harm when seeking to minimize (resp. maximize) the time to completion (resp. failure), and can further even be strictly beneficial if in addition (c) is precluded. Given the  relevance of stochastic reset in applied probability (see Subsection~\ref{subsection:connections} below) it certainly  seems worthwhile to record the precise conditions under which this occurs (as well as the various (non)implications between them). 



By way of example, let $\TT$ be the law of the first passage time above the level $1$ of a standard linear Brownian motion $B$; it is finite a.s. with infinite mean. Restarting $B$  at fixed deterministic epochs, say,  will render this first hitting/passage time to be of finite mean. But will it be even first-order stochastic dominated by the time without restart? And if we restart an exponentially distributed time, then its law is surely left invariant by this resetting. Is it the only law of which this is true? The puporse of this paper is to provide precise answers to such and similar questions. 

In case when we are seeking to minimize the time to completion, a natural complement to the above consists in allowing the effort to be increased by the same factor $l\in \mathbb{N}_{\geq 2}$ on each iteration, so that on first reset (if it occurs) not one, but $l$ independent processes with the same distribution as the original one are started,  termination  occuring at the minimum of their respective times to completion, unless a second reset occurs before that, in which case $l^2$ independent procecess are started, and so on.  We will have occasion to give (partial) answers to the above questions also in the context of this ``restart with $l$-fold branching''. 

\subsection{A flavor of the results}\label{subsection:flavor}
A very inexhaustive, completely partial, and slightly informal indication of the results to follow is given by the following table. In it, for a law $\TT$ with tail function $\oF$, it is indicated what the condition is that renders $\TT$ always to have a certain behaviour relative to first-order stochastic dominance under a given class of reset laws $\RR$. For instance, the two entries under ``no bigger'' \& ``every exponential reset'' (one entry for the case when there is no branching and one entry for when there is  $l$-fold branching with $l\in \mathbb{N}_{\geq 2}$), give the condition that the law $\TT$ is no bigger in the sense of first-order stochastic dominance under every exponential reset. 
The conditions must be read ``universally quantified'', e.g. 
``$\frac{1}{t}\int_0^t\oF(u)\oF(t-u)\dd u\leq \oF(t)$'' should really come equipped with  ``for all $t\in (0,\infty)$''.
\noindent \footnotesize
\begin{tabular}{|p{1.3cm}||p{2.8cm}|p{3.6cm}||p{2.8cm}|p{3.8cm}|}\hline
 & \multicolumn{2}{c||}{no branching} & \multicolumn{2}{c|}{$l$-fold branching}\\\hline
under /\phantom{} the law is& every [or just every deterministic] reset & every exponential reset & every [or just every deterministic] reset & every exponential reset\\\hline
no bigger & $\overline{F}(x)\overline{F}(y)\leq \overline{F}(x+y)$ & $\frac{1}{t}\int_0^t\oF(u)\oF(t-u)\dd u\leq \oF(t)$ &  $\overline{F}(x)\overline{F}(y)^l\leq \overline{F}(x+y)$ & $\frac{1}{t}\int_0^t\oF(u)\oF(t-u)^l\dd u\leq \oF(t)$\\\hline
invariant & \multicolumn{2}{c||}{exponential distributions} & \multicolumn{1}{c|}{cannot happen}  & \multicolumn{1}{c|}{?}\\\hline
\end{tabular}
\normalsize

The ``no smaller'' case is analagous (when relevant). For the behaviour relative to the order in mean,  for more in-depth statements and for (counter)examples the reader must consult the main body of the text.

\subsection{Connections to existing literature}\label{subsection:connections}
The author was orginally motivated to explore the above subject matter after reading the paper \citep{pal}. Indeed it is therein that the procedure described in the last paragraph of Subsection~\ref{subsection:problem} was called restart with branching (following on from \citep{Eliazar}, where the term ``branching search'' was introduced). 

Now, in a main result of \citep{pal}, a universal condition is found on $\TT$ under which exponential restart  with branching \emph{of sufficiently small rate} renders the completion time to have a smaller mean (than without restart), generalizing the analogous no-branching result of \citep{reuveni}. On the other hand, in \citep{Eliazar} the analysis is made with a \emph{fixed deterministic} reset law $\RR=\delta_\tau$ (where $\tau\in (0,\infty)$); in particular those laws are characterized that are rendered invariant by $\delta_\tau$-restart with some $\beta$-fold branching (where  $\beta\in \mathbb{N}$ too is fixed). Further, in \cite{checkin} it is found  that the only cases when \emph{some} restart can do harm in the mean (in the sense of  increasing the expected time-to-completion) correspond to  (in slightly informal terms)  ``the single run hitting probability densities decaying faster than exponentially''. And finally, in the context of a ballistic random walker with a random velocity, that is reset to its starting position at random epochs, \cite{Villarroel} observe that the mean time to reaching a given level by the walker ``has exponential distribution regardless of the distribution of reset times'' provided the distribution of the velocity is inverse-exponential, and further that ``the richness of the system
manifests in surprising behaviors: in certain cases the hitting time may be in inverse
relation to the reset activity.'' 

The questions raised in Subsection~\ref{subsection:problem} (and their answers) are then a complement to what has hitherto been considered (established): we are asking for the precise conditions that characterize (weak) improvement/invariance over arbitrary reset (possibly only within the deterministic, or exponential class), separately for the case without branching, and with a given $l$-fold branching, $l\in \mathbb{N}_{\geq 2}$; moreover, we do so both relative to first-order stochastic dominance as well as relative to dominance in expectation (where the physics literature tends to consider only dominance in mean).

Looking out further in terms of existing literature, stochastic restart (a.k.a. reset/restart search) has been the subject of relatively intense recent study, especially in the statistical physics literature, and has been so in problems ranging across biology, chemistry, physics and data/computer science. 
We refer the reader to the introductions of the papers \citep{reuveni,pal-reuveni,Eliazar,checkin,Villarroel,unified,pal} for an overview of this literature that looks at reset from the point of view of minimizing the time to completion. From the opposite stance --- maximizing the time to failure --- restart falls naturally under reliability (ageing, longevity) theory \citep{desphande,weiss,barlow}, in which case it is better called (preventative) replacement.

\subsection{Article organization} 
The main analysis is carried out in Sections~\ref{section:restart} and~\ref{section:branching} dealing with reset without and with branching, respectively.  Section~\ref{section:illustration} closes with a simple illustration, but many  (counter)examples are already given along the way. 


\subsection{General notation}  Given a probability law $\QQ$ we will  write $\QQ[W]$ for $\EE_\QQ[W]$, $\QQ[W;A]$ for $\EE_\QQ[W\mathbbm{1}_A]$ and $\QQ[W\vert A]$ for $\EE_\QQ[W\vert A]$. If a probability measure is denoted by $\PP$, then its expectation operator is denoted simply with the symbol $\EE$.    We use $\supp(\TT)$ to denote the support of a probability law $\TT$ on $[0,\infty]$; $\id$ is the identity on $[0,\infty]$. Given an expression $f(x)$ defined for $x\in X$ we write sometimes $f(\cdot)$ to mean the function $(X\ni x\mapsto f(x))$.

\subsection{A vademecum}\label{vademecum} For those that prefer working with probability density functions rather than probability laws,  if a real random variable $X$ has law $\mathcal{X}$ under the probability $\PP$, and if $\mathcal{X}$ is absolutely continuous with density (p.d.f.) $f_X$, then $\EE[g(X)]=\mathcal{X}[g]=\int_\mathbb{R} g(x)f_X(x)\dd x$ for measurable $g:\mathbb{R}\to \mathbb{R}$, in particular $\EE[\mathbbm{1}_A]=\PP(X\in A)=\mathcal{X}(A)=\mathcal{X}[\mathbbm{1}_A]=\int_Af_X(x)\dd x$ for Borel $A\subset\mathbb{R}$. For instance, if $X$ is $[0,\infty)$-valued, then $\EE[X]=\EE[\id(X)]=\mathcal{X}[\id]=\int_0^\infty x f_X(x)\dd x$ is the mean of $X$,  $\EE[X-a\vert X>a]=\mathcal{X}[\id-a\vert (a,\infty)]=\int_a^\infty (x-a)f_X(x)\dd x/\int_a^\infty f_X(x)\dd x$ is the residual mean of $X$ at some given $a\in (0,\infty)$ (assuming $\PP(X>a)>0$), etc. Using this ``dictionary'', the results below may be transcribed in terms of p.d.f. if one pleases (of course this means one must assume the existence of densities, viz. absolute continuity).

\section{Reset and stochastic order}\label{section:restart}

Let $\TT$ be a probability law on the Borel sets of $[0,\infty]$, viz. the probability law of a random time. To avoid trivial considerations we will assume that $\TT((0,\infty])>0$ and that  $\inf \supp (\TT)=0$, i.e. if $T\sim \TT$, then $T$ is  positive with a positive probability, and smaller than any given positive number with a positive probability. We denote by $F:[0,\infty)\to [0,1]$ the distribution function of $\TT$ and let $\oF:=1-F$ be the tail function of $\TT$. Note $\oF$ determines $\TT$ uniquely and that in terms of $\oF$ our standing assumptions read: $\oF(0)>0$ and $\oF(r)<1$ for all $r\in (0,\infty)$. We set $F(\infty):=1$ and $\oF(\infty):=0$ (a convention that does not preclude $\lim_{u\to \infty} \oF(u)>0$).

\begin{definition}[Reset law]\label{definition:reset-order}
A probability law $\RR$ on the Borel sets of $[0,\infty]$, with $\RR((0,\infty])\land \RR([0,\infty))>0$ is called a reset law.
\end{definition}
\begin{remark}
For each $r\in (0,\infty)$, $\delta_r$ and $\Exp(r)$ are reset laws. 
\end{remark}
\begin{remark}
Under our assumptions any reset law $\RR$ satisfies $\TT\times \RR(\{(t,r)\in [0,\infty]^2:t\leq r\})>0$, a condition that is crucial to the definition of reset to make sense, cf. \eqref{eq:reset}. Indeed the only way for $\RR=\delta_0$ to satisfy $\TT\times \RR(\{(t,r)\in [0,\infty]^2:t\leq r\})>0$ is for $\TT(\{0\})>0$, in which case $\TT^\RR=\delta_0$. Hence when identifying conditions under which $\TT$ is rendered no bigger (resp. no smaller) by reset we lose no generality by insisting on the reset laws to satisfy ``$\RR((0,\infty])>0$'', since $\delta_0$ is no bigger than any law (resp., as it will emerge, if $\TT(\{0\})>0$, then $\TT$ cannot be no smaller under arbitrary (or just deterministic) reset anyway, the assumption ``$\RR((0,\infty])>0$'' on the reset laws notwithstanding).  On the other hand, the assumption $\RR([0,\infty))>0$ simply excludes the trivial law $\RR=\delta_\infty$ for which $\TT^\RR=\TT$. 
\end{remark}
\begin{definition}[Reset]\label{definition:law-reset-by}
Let $\RR$ be a reset law. We define two new probability laws, $\TT^\RR$ and $\TT_\RR$, on the Borel sets of $[0,\infty]$ as follows. First, let $(T_k)_{k\in \mathbb{N}}$ be a sequence of $[0,\infty]$-valued i.i.-with law $\TT$-d. random variables and let $(R_k)_{k\in \mathbb{N}}$ be an independent sequence of $[0,\infty]$-valued i.i.-with law $\RR$-d. random variables, all defined on a commom probability space $(\Omega,\FF,\PP)$. Then $\TT^\RR$ is the law of the random time $\tilde{T}:\Omega\to [0,\infty]$ specified a.s. (because $\PP(T_1\leq R_1)>0$) by $$\tT=R_1+\cdots+R_{k-1}+T_k\text{ on }\{R_1<T_1,\ldots,R_{k-1}<T_{k-1},T_k\leq R_k\},\quad k\in \mathbb{N}.$$
We denote by $\oF^\RR$ the tail function of $\TT^\RR$. To define $\TT_\RR$ let, again on a common probability space $(\Omega,\FF,\PP)$, $R$ be a $[0,\infty]$-valued random time with law $\RR$ independent of the $[0,\infty]^2$-valued pair $(T_1,T_2)$ that has law $\TT\times \TT$. Then $\TT_\RR$ is the law of the random time $T':=T_1\mathbbm{1}_{\{T_1\leq R\}}+(R+T_2)\mathbbm{1}_{\{R<T_1\}}$. 
\end{definition}
\emph{It will not be formally relevant, and we do not assume it, but we mean to investigate below the situation when $\TT$ is non-lattice. In the lattice case the natural considerations would be different (only the reset laws carried by the same lattice as the one which supports $\TT$ would be meaningfully included in the discussion).} 

\subsection{Reset and the usual stochastic order}
We focus first on the behavior of $\TT$ under reset relative to first-order stochastic dominance. To avoid any ambiguity, we recall that given two probability laws $\AA$ and $\BB$ on the Borel sets of $[0,\infty]$, then $\BB$ is said to first-order stochastic dominate $\AA$, written $\BB\stg\AA$ (or that $\AA$ is first-order stochastic dominated by $\BB$, written $\AA\stl\BB$) iff $\BB[\mathbbm{1}_{(t,\infty]}]=\BB((t,\infty])\geq \AA((t,\infty])=\AA[\mathbbm{1}_{(t,\infty]}]$ for all $t\in [0,\infty)$, which  then implies that $\BB[G]\geq \AA[G]$ for all nondecreasing $G:[0,\infty]\to [0,\infty]$  \citep[Paragraph 1.A.1]{reset}.


Our first proposition identifies precisely the laws that are, in the sense of first-order stochastic dominance, no bigger (no smaller, invariant) under stochastic reset, equivalently (as it emerges) deterministic reset. It becomes almost obvious once stated; still some minimal amount of care is needed in the proof to handle the general resetting.  We adhere to  the convention $\log 0:=-\infty$ below.

\begin{proposition}\label{proposition}
The following statements are equivalent.
\begin{enumerate}[(i)]
\item\label{thm:i} $\TT$ is no bigger (resp. no smaller, invariant) under reset:
 $\TT^{\RR}\stl\TT$ (resp. $\TT^{\RR}\stg\TT$, $\TT^\RR=\TT$) for all reset laws $\RR$. 
\item\label{thm:ii} $\TT$ is no bigger (resp. no smaller, invariant) under deterministic reset: 
$\TT^{\delta_r}\stl\TT$ (resp. $\TT^{\delta_r}\stg\TT$, $\TT^{\delta_r}=\TT$) for all $r\in (0,\infty)$.
\item\label{thm:iii}  $-\log \oF:[0,\infty)\to [0,\infty]$ is subadditive (resp. superadditive, additive); i.e.  for all $r\in [0,\infty)$ and $t\in [0,\infty)$,
\[\TT[\mathbbm{1}_{(t,\infty]}(\cdot -r)\vert (r,\infty]]\text{ is }\geq \TT[\mathbbm{1}_{(t,\infty]}]\text{ (resp. $\leq \TT[\mathbbm{1}_{(t,\infty]}]$, $=\TT[\mathbbm{1}_{(t,\infty]}]$)}
\]
with  $\TT((r,\infty])>0$ implicit (resp. whenever  $\TT((r,\infty])>0$, with  $\TT((r,\infty])>0$ implicit); in still other words, $\oF$ is supermultiplicative (resp. submultiplicative, multiplicative).  
\item\label{thm:v}  $\TT_\RR\stl\TT$ (resp. $\TT_\RR\stg\TT$, $\TT_\RR=\TT$) for all reset laws $\RR$. 
 \item\label{thm:iv}  $\TT_{\delta_r}\stl\TT$ (resp. $\TT_{\delta_r}\stg\TT$, $\TT_{\delta_r}=\TT$) for all $r\in (0,\infty)$. 
 \end{enumerate}
 Furthermore, $\oF$ is multiplicative iff  $\TT=\Exp(\lambda)$ for some $\lambda\in (0,\infty)$.
\end{proposition}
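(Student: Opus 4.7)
The plan is to prove the cycles (i) $\Rightarrow$ (ii) $\Rightarrow$ (iii) $\Rightarrow$ (i) and (iv) $\Rightarrow$ (v) $\Rightarrow$ (iii) $\Rightarrow$ (iv), carrying the three cases (``no bigger'', ``no smaller'', ``invariant'') in parallel; the latter two follow from the first by reversing the inequality everywhere and by taking the conjunction, respectively. The trivial implications are (i) $\Rightarrow$ (ii) and (iv) $\Rightarrow$ (v), each amounting to the observation that $\delta_r$ qualifies as a reset law for every $r\in(0,\infty)$.

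For (ii) $\Leftrightarrow$ (iii) and (v) $\Leftrightarrow$ (iii), I would compute the two tail functions in closed form. Iterating the one-step recursion~\eqref{eq:reset-informal} with $R\equiv r$ gives $\oF^{\delta_r}(nr+t)=\oF(r)^n\oF(t)$ for $t\in[0,r)$ and integers $n\geq 0$; conditioning once on $R$ in the definition of $T'$ yields, for $\TT_{\delta_r}$, the tail value $\oF(s)$ on $[0,r)$ and $\oF(r)\oF(s-r)$ on $[r,\infty)$. Varying $r$ and the target time then reduces the comparison with $\oF$ at once to the multiplicative condition $\oF(a)\oF(b)\leq\oF(a+b)$ (resp.\ $\geq$, $=$) for all $a,b\in[0,\infty)$; the ``diagonal'' case $a=b$ is obtained by perturbing slightly and invoking right-continuity of $\oF$.

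For (iii) $\Rightarrow$ (iv) one integrates the analogous closed form
\[\oF_\RR(s)=\int_{[0,s]}\oF(r)\oF(s-r)\,\RR(\dd r)+\oF(s)\,\RR((s,\infty])\]
against $\RR$ and applies supermultiplicativity pointwise under the integral. The substantive step is (iii) $\Rightarrow$ (i): I decompose $\{\tT>s\}$ by the number $k$ of resets completed strictly before time $s$. With $S_k:=R_1+\cdots+R_k$, independence of $(T_i)$ from $(R_j)$ gives the renewal-style identity
\[\oF^{\RR}(s)=\sum_{k\geq 0}\EE\!\left[\oF(R_1)\cdots\oF(R_k)\,\oF(s-S_k)\,;\,S_k\leq s<S_{k+1}\right].\]
Iterated supermultiplicativity dominates each summand by $\oF(R_1+\cdots+R_k+(s-S_k))=\oF(s)$ on $\{S_k\leq s\}$, and the remaining mass $\sum_{k\geq 0}\PP(S_k\leq s<S_{k+1})$ equals $1$ because $S_k\to\infty$ almost surely (a consequence of $\RR((0,\infty])>0$). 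Hence $\oF^\RR\leq \oF$; reversing the inequality treats the submultiplicative case, and multiplicativity collapses to equality.

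The ``Furthermore'' is classical: additivity of $-\log\oF$ on $[0,\infty)$ is Cauchy's functional equation, and measurability of $\oF$ (being monotone) forces the unique solution $-\log\oF(t)=\lambda t$ for some $\lambda\in[0,\infty]$; right-continuity of $\oF$ together with $\oF(0)>0$ and $\oF(r)<1$ for every $r>0$ then pin down $\lambda\in(0,\infty)$, whence $\TT=\Exp(\lambda)$. I expect the main obstacle to be the (iii) $\Rightarrow$ (i) step: the pathwise decomposition of $\{\tT>s\}$ has to be set up carefully, iterated supermultiplicativity applied inside the expectation, and the residual sum recognised as a partition of unity, all while handling gracefully the possibilities that $\RR$ places mass at $0$ or at $\infty$ (and, for the ``no smaller'' variant, the possibility that $\TT$ has an atom at $\infty$).
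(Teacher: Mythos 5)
Your proposal is correct, and its overall architecture coincides with the paper's: closed-form computation of $\oF^{\delta_r}$ (the paper's \eqref{eq:deterministic}) and of the tails of $\TT_{\delta_r}$ and $\TT_\RR$ (the paper's \eqref{eq:single-reset-det} and \eqref{eq:single-reset}), reduction of the deterministic/single-reset statements to super/sub/multiplicativity of $\oF$ with the diagonal case recovered via right-continuity, and Cauchy's functional equation for the ``Furthermore''. The one genuinely different ingredient is your treatment of the substantive implication \ref{thm:iii}$\Rightarrow$\ref{thm:i}: you decompose $\{\tT>s\}$ by the number of reset epochs up to time $s$, which gives directly the renewal identity $\oF^\RR(s)=\sum_{k\geq 0}\EE[\oF(R_1)\cdots\oF(R_k)\oF(s-S_k);\,S_k\leq s<S_{k+1}]$; a $k$-fold application of supermultiplicativity bounds each integrand by $\oF(s)$, and the events $\{S_k\leq s<S_{k+1}\}$ partition the probability space a.s.\ because $S_k\to\infty$ a.s.\ (using only $\RR((0,\infty])>0$). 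The paper instead decomposes by the index of the completing run, which is the more involved computation \eqref{eq:stochastic} and needs an extra telescoping argument together with $\EE[\oF(R_1)\cdots\oF(R_k)]=\PP(R_1<T_1)^k\to 0$; your identity is precisely the one the paper only writes down later, in \eqref{exp-reset}, for exponential $\RR$. So your route is a little more economical at this step, and it handles atoms of $\RR$ at $0$ or $\infty$ with no extra effort. One minor point to tighten: in the ``Furthermore'' you invoke the measurable-solution theory of Cauchy's equation for $-\log\oF$ while allowing the value $+\infty$; you should first rule out zeros of $\oF$ (if $\oF(x)=0$, multiplicativity forces $\oF(x/2^n)=0$ for all $n$, contradicting right-continuity at $0$ and $\oF(0)>0$), which is exactly how the paper proceeds and is only implicit in your exclusion of ``$\lambda=\infty$''.
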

Before turning to the proof we make some observations concerning the above.
\begin{remark}
If $\oF(0)=1$ and $\oF$ is supermultiplicative, then $\TT$ can have no finite atoms. More generally, if $\oF$ is supermultiplicative, then all finite atoms of $\TT$ have, relative to the left limit, size at most $1-\oF(0)$.
\end{remark}
\begin{remark}
In the preceding proposition, we cannot ``separate  condition \ref{thm:iii} according to $t\in [0,\infty)$'': for a fixed $t\in [0,\infty)$, the implication 
\begin{quote}
$\TT^{\delta_r}[\mathbbm{1}_{(t,\infty)}]\leq \TT[\mathbbm{1}_{(t,\infty)}]$ for all $r\in (0,\infty)$ $\Longleftarrow$  $\TT((r,\infty])>0$ and $\TT[\mathbbm{1}_{(t,\infty]}(\cdot -r)\vert (r,\infty]]\geq \TT[\mathbbm{1}_{(t,\infty]}]$ for all $r\in (0,\infty)$
\end{quote}
fails to hold in general, as the next example demonstrates. We will however see  that if we replace in the preceding $\mathbbm{1}_{(t,\infty)}$ with the identity map, then the implication ``$\Longleftarrow$'' becomes an equivalence ``$\Longleftrightarrow$'' (also if in the first statement we insist on the inequality for all reset laws $\RR$ in lieu of $\delta_r$).  But this then cannot be a (completely) ``universal'' phenomenon (it must presumably owe itself to the particularities of the identity map /additivity/).
\end{remark}
\begin{example}
 Suppose $\oF(t)=\mathbbm{1}_{[0,1)\cup (2,\infty)}(t)e^{- t}+e^{-2}\mathbbm{1}_{[1,2]}$ for $t\in [0,\infty)$. We have $\oF(u)\oF(1)\leq \oF(1+u)$ for all $u\in [0,\infty)$ (but $\oF(1-a)\oF(a)>\oF(a+(1-a))$ for all $a\in (0,1)$). Set also $\RR=\delta_{\frac{1}{2}+\epsilon}$ for an $\epsilon\in (0,\frac{1}{2})$. We have on the one hand $\TT((1,\infty])=e^{-2}$. On the other hand $\TT^\RR((1,\infty])\geq  \TT((\frac{1}{2}+\epsilon,\infty])\TT((\frac{1}{2},\infty])=e^{-1-\epsilon}$. Hence $\TT^\RR((1,\infty])>\TT((1,\infty])$.
 \end{example}
 \begin{remark}
 If $-\log \oF$ is subadditive, then from the right-continuity of $\oF$ and from $\oF(0)=\TT((0,\infty])>0$, it follows that $\oF>0$ (everywhere). 
 On the other hand, if $-\log\oF$ is superadditive, then we must have $\oF(0)=1$. 
 \end{remark}
 \begin{remark}
 Property \ref{thm:iii} is used as the defining property of the class of ``new worse than used'' (resp. ``new better than used'', ``new same as used'') distributions in reliability theory \citep{nofal,rao}. See \citep[Lemma~2.1]{el} for another (unrelated) statement involving stochastic order that is equivalent to the submultiplicativity of $\oF$.
 \end{remark}
 \begin{corollary}\label{corollary:finiteness-of-means}
If $\TT$ is no bigger (resp. no smaller) under [deterministic] reset, then for all  nondecreasing $G:[0,\infty]\to [0,\infty]$ and for all reset laws $\RR$, the finiteness (resp. divergence) of $\TT[G]$, i.e. of the $G$-moment for $\TT$, implies the finiteness (resp. divergence) of $\TT^\RR[G]$, i.e. of the $G$-moment for $\TT^\RR$. \qed
\end{corollary}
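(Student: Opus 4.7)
The plan is to deduce the corollary directly from Proposition~\ref{proposition} together with the basic monotonicity of expectation under first-order stochastic dominance. Indeed, suppose $\TT$ is no bigger under [deterministic] reset. By the equivalence \ref{thm:ii}$\Leftrightarrow$\ref{thm:i} in the proposition (``no bigger'' version), this is equivalent to $\TT^\RR\stl\TT$ for \emph{every} reset law $\RR$ — this is the substantive content, as it promotes the hypothesis stated only in terms of deterministic resets to an inequality for arbitrary reset laws. Analogously, under the ``no smaller'' hypothesis one upgrades $\TT^{\delta_r}\stg\TT$ for all $r\in (0,\infty)$ to $\TT^\RR\stg\TT$ for every reset law $\RR$.

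Next, I would invoke the fact recalled just before the proposition that $\BB\stg\AA$ implies $\BB[G]\geq \AA[G]$ for every nondecreasing $G:[0,\infty]\to [0,\infty]$ (\citep[Paragraph 1.A.1]{reset}). Applying this with $(\AA,\BB)=(\TT^\RR,\TT)$ in the ``no bigger'' case gives $\TT^\RR[G]\leq \TT[G]$ for each such $G$ and each reset law $\RR$; hence $\TT[G]<\infty$ forces $\TT^\RR[G]<\infty$. The ``no smaller''/divergence direction is entirely symmetric: with $(\AA,\BB)=(\TT,\TT^\RR)$ one obtains $\TT^\RR[G]\geq \TT[G]$, so $\TT[G]=\infty$ forces $\TT^\RR[G]=\infty$.

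There is no genuine obstacle: once the proposition is available, the corollary is essentially a formality, and this is presumably why the paper ends the statement with \qed in lieu of a separate proof. The only conceptual point worth flagging is that the passage from ``deterministic'' reset (in the hypothesis) to ``all'' reset laws (in the conclusion) is \emph{not} free — it rests squarely on the nontrivial implication \ref{thm:ii}$\Rightarrow$\ref{thm:i} of the proposition, which supplies exactly the uniformity in $\RR$ needed to apply the stochastic-dominance-preserves-monotone-integrals principle for an arbitrary reset law.
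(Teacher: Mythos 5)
Your argument is correct and is exactly the one the paper intends: the \qed on the statement signals that the corollary follows immediately by combining the equivalence \ref{thm:ii}$\Leftrightarrow$\ref{thm:i} of Proposition~\ref{proposition} with the fact, recalled just before it, that $\BB\stg\AA$ implies $\BB[G]\geq\AA[G]$ for all nondecreasing $G:[0,\infty]\to[0,\infty]$. Nothing further is needed.
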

\begin{remark}\label{remark:moments}
The latter corollary may be seen as a complement to the observation of \cite{checkin} that ``the number of moments of the hitting time
distribution under resetting is not less than the sum of the numbers of moments of the resetting time distribution and the hitting time distribution without resetting''.
\end{remark}
\begin{proof}[Proof Proposition~\ref{proposition}]
Suppose first that $\TT$ is no bigger under deterministic reset. Let $r\in (0,\infty)$ and set $\RR:=\delta_r$. Assume the setting and notation of Definition~\ref{definition:law-reset-by}. Let also  $k\in \mathbb{N}_0$ and $t\in [kr,(k+1)r)$. Then
\begin{align}
\oF^{\delta_r}(t)
&=\PP(r<T_1,\ldots,r<T_{k}, t-rk<T_{k+1}\leq r)+\PP(r<T_1,\ldots,r<T_{k+1})\nonumber\\
&=\oF(r)^k(\oF(t-rk)-\oF(r))+\oF(r)^{k+1}
=\oF(r)^k\oF(t-rk)\leq \oF(t).\label{eq:deterministic}
\end{align}
Now let $\{x,y\}\subset [0,\infty)$, $0<x> y$. Setting $r:=x$, $k:=1$, $t:=x+y$ in the preceding shows that $\oF(x+y)\geq \oF(x)\oF(y)$. Taking limits, exploiting the right-continuity of $\oF$, ceteris paribus, the premise $0<x>y$ can be weakened to $0\leq x\geq y$ and the subaddivity of $-\log \oF$ follows. Similarly if we had assumed that $\TT$ was no smaller (invariant) under deterministic reset, the superaddivity (additivity) of $-\log\oF$ would have obtained. 

Now suppose that $-\log \oF$ is subadditive. Let $\RR$ be a reset law and assume again the setting and notation of Definition~\ref{definition:law-reset-by}. Let also $t\in [0,\infty)$. We compute and estimate:
\begin{align}
\oF^\RR(t)&=\sum_{k=1}^\infty \PP(R_1<T_1,\ldots,R_{k-1}<T_{k-1},T_k\leq R_k,R_1+\cdots+R_{k-1}+T_k>t)\nonumber\\
&= \sum_{k=1}^\infty \EE\big[\oF(R_1)\cdots \oF(R_{k-1})(\oF(t-R_1-\cdots-R_{k-1})\mathbbm{1}_{\{R_1+\cdots +R_{k-1}\leq t\}}\nonumber\\
&\hspace{5cm}+\mathbbm{1}_{\{R_1+\cdots+ R_{k-1}> t\}}-\oF(R_k));R_1+\cdots +R_k> t\big]\nonumber\\
&\leq \sum_{k=1}^\infty \EE\big[\oF(t)\mathbbm{1}_{\{R_1+\cdots +R_{k-1}\leq t\}}+\oF(R_1)\cdots \oF(R_{k-1})\mathbbm{1}_{\{R_1+\cdots+ R_{k-1}> t\}}\nonumber\\
&\hspace{3cm}-\oF(R_1)\cdots \oF(R_{k-1})\oF(R_k);R_1+\cdots +R_k> t\big]\nonumber\\
&=\sum_{k=1}^\infty \EE\big[\oF(t)\mathbbm{1}_{\{R_1+\cdots +R_{k-1}\leq t<R_1+\cdots+R_k\}}]+\sum_{k=1}^\infty\EE[\oF(R_1)\cdots \oF(R_{k-1})\mathbbm{1}_{\{R_1+\cdots+ R_{k-1}> t\}}\nonumber\\
&\hspace{3cm}-\oF(R_1)\cdots \oF(R_k)\mathbbm{1}_{\{R_1+\cdots +R_k> t\}}]=\oF(t),\label{eq:stochastic}
\end{align}
where the final equality follows from the fact that by the law of large numbers, a.s. $R_1+\cdots +R_k\uparrow \infty$ as $k\to \infty$ (because $\PP(R_1>0)>0$) and (to handle the telescopic series) from the fact that $\EE[\oF(R_1)\cdots \oF(R_{k});R_1+\cdots+ R_{k}> t]\leq \EE[\oF(R_1)\cdots \oF(R_{k})]=\PP(R_1<T_1)^k\downarrow 0$ as $k\to \infty$ (because $\PP(R_1<T_1)<1$). Similarly submultiplicativity (multiplicativty) of $\oF$ would yield that $\TT$ is no smaller (invariant) under reset. 

Next, assume \ref{thm:iv} holds. We may express, for $r\in (0,\infty)$, 
\begin{equation}\label{eq:single-reset-det}
\TT_{\delta_r}((t,\infty])=\oF(t)\mathbbm{1}_{(t,\infty)}(r)+\oF(r)\oF(t-r)\mathbbm{1}_{[0,t]}(r),\quad t\in [0,\infty),
\end{equation}
whence \ref{thm:iii} follows. Conversely assume the latter and let $\RR$ be a reset law. We see that
\begin{equation}
\TT_\RR((t,\infty])
=\int \oF(t)\mathbbm{1}_{(t,\infty]}(r)+\oF(r)\oF(t-r)\mathbbm{1}_{[0,t]}(r)\RR(\dd r),\quad t\in [0,\infty).\label{eq:single-reset}
\end{equation}
The assertion of \ref{thm:v} follows.

Assume now $\oF(x)\oF(y)=\oF(x+y)$ whenever $\{x,y\}\subset [0,\infty)$.  We see that necessarily, for all $x\in (0,\infty)$, $\oF(x)>0$ , for otherwise the functional equation for $\oF$ would imply that $\oF(x/2^n)=0$ for all $n\in \mathbb{N}$, which would contradict the right-continuity of $\oF$ at $0$ (and the fact that $\oF(0)>0$). Taking logarithms the theory of Cauchy's functional equation implies that $\oF(x)=e^{-\lambda x}$ for all $x\in [0,\infty)$, for some $\lambda\in \mathbb{R}$; necessarily then $\lambda\in (0,\infty)$ (because $\TT([0,\infty))>0$). It means that $\TT=\Exp(\lambda)$. 
\end{proof}

\begin{remark}
It is clear from \eqref{eq:stochastic} that if $\oF$ is supermultiplicative (resp. submultiplicative), then even if in Definition~\ref{definition:law-reset-by} we, ceteris paribus, drop the premise that the $R_i$, $i\in \mathbb{N}$, are identically distributed with law $\RR$, and ask instead merely that $\prod_{k=1}^\infty \PP(R_k<T_k)=0$  (resp. and $\sum_{l=1}^k R_l\uparrow \infty$ a.s. as $k\to\infty$), then still the law of $\tT$ is first-order stochastic dominated by (resp. first-order stochastic dominates) $\TT$.
\end{remark}


We now look at exponential reset. Perhaps somewhat surprisingly the condition for $\TT$ to be no bigger/no smaller in first-order stochastic dominance is now different, but invariance is still characteristic of exponential laws.

\begin{proposition}\label{proposition:exponential}
The following statements are equivalent. 
\begin{enumerate}[(i)]
\item $\TT$ is no bigger (resp. no smaller, invariant) under exponential reset: $\TT^{\Exp(\mu)}\stl\TT$ (resp. $\TT^{\Exp(\mu)}\stg \TT$, $\TT^{\Exp(\mu)}= \TT$) for all $\mu\in (0,\infty)$. 
\item $\oF^{\Exp(\mu)}(t)$ is $\leq \oF(t)$ (resp. $\geq \oF(t)$, $=\oF(t)$) for all sufficiently small $\mu\in (0,\infty)$ for all $t\in [0,\infty)$ (in this order of qualification!).
\item $\frac{1}{t}\int_0^t\oF(u)\oF(t-u)\dd u$ is $\leq \oF(t)$ (resp. $\geq \oF(t)$, $=\oF(t)$) for all $t\in (0,\infty)$.
\end{enumerate}
Furthermore, $\frac{1}{t}\int_0^t\oF(u)\oF(t-u)\dd u=\oF(t)$ for all $t\in (0,\infty)$ iff $\TT=\Exp(\lambda)$ for some $\lambda\in (0,\infty)$.  
\end{proposition}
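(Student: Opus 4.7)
The plan is to route everything through the Poisson-process representation of exponential reset. Conditional on $n$ reset epochs in $(0,t)$ (marginally $\mathrm{Poisson}(\mu t)$), the reset times are order statistics of $n$ i.i.d.\ uniforms in $(0,t)$, so the $n+1$ spacings $(S_0^{(n)},\ldots,S_n^{(n)})$ between consecutive resets (including the initial and terminal ones) are uniform on the simplex $\{s_i\geq 0,\,\sum_{i=0}^n s_i=t\}$. Since $\{\tilde T > t\}$ means that none of the $n+1$ independent $\TT$-distributed run-thresholds is exceeded by its allotted spacing, I would obtain
\[
\oF^{\Exp(\mu)}(t) = \sum_{n=0}^\infty e^{-\mu t}\frac{(\mu t)^n}{n!}\,b_n(t), \qquad b_n(t) := \EE\Bigl[\prod_{i=0}^n \oF(S_i^{(n)})\Bigr].
\]
Note $b_0(t) = \oF(t)$ and $b_1(t) = \frac{1}{t}\int_0^t\oF(u)\oF(t-u)\,\dd u$, so (iii) reads $b_1\leq b_0$ pointwise.

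The direction (i)$\Rightarrow$(ii) is immediate. For (ii)$\Rightarrow$(iii), I would differentiate the series at $\mu=0$: $\partial_\mu\oF^{\Exp(\mu)}(t)\big|_{\mu=0} = t\bigl(b_1(t)-b_0(t)\bigr)$, so any $t$ with $b_1(t) > b_0(t)$ would force a violation of (ii) at that $t$ for sufficiently small $\mu$.

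The main step is (iii)$\Rightarrow$(i), which I would reduce to the monotonicity $b_n(t) \leq b_{n-1}(t)$ for each $n\geq 1$, $t > 0$: granted this, $b_n \leq b_0 = \oF$ and the Poisson series sums to at most $\oF(t)$, yielding (i). I would prove the monotonicity in two steps. Conditioning on $(S_0^{(n)},\ldots,S_{n-2}^{(n)})$, the pair $(S_{n-1}^{(n)},S_n^{(n)})$ is conditionally uniform on the 1-simplex of sum $R := t-\sum_{i\leq n-2}S_i^{(n)}$, so (iii) applied at $R$ delivers $\EE[\oF(S_{n-1}^{(n)})\oF(S_n^{(n)})\mid\mathcal G]\leq\oF(R)=\EE[\oF(S_{n-1}^{(n)}+S_n^{(n)})\mid\mathcal G]$, whence
\[
b_n(t) \leq c_n(t) := \EE\Bigl[\prod_{i=0}^{n-2}\oF(S_i^{(n)})\cdot\oF(S_{n-1}^{(n)}+S_n^{(n)})\Bigr].
\]
It remains to identify $c_n(t) = b_{n-1}(t)$ by a change-of-measure calculation: the collapsed vector is $\mathrm{Dirichlet}(1,\ldots,1,2)$ on the $(n-1)$-simplex of sum $t$ and has density $nT_{n-1}/t$ relative to the uniform (Dirichlet$(1,\ldots,1)$) law (with $T_{n-1}$ the last coordinate); by exchangeability of the uniform simplex together with $\sum_i T_i = t$, $\EE_{\mathrm{unif}}[T_{n-1}\prod_i\oF(T_i)] = (t/n)\,b_{n-1}(t)$, giving $c_n(t) = b_{n-1}(t)$. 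The hard part will be locating this exchangeability identity: the naive attempt of conditioning on a single spacing and iteratively using $b_{n-1}\leq b_0$ gives bounds strictly too weak for $n \geq 3$, so one really needs to collapse a pair and exploit uniform-simplex symmetry.

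The ``no smaller'' case runs identically with all inequalities reversed; the invariance case gives equality throughout. For the final characterization, equality in (iii) says $(\oF*\oF)(t) = t\oF(t)$ for all $t > 0$; taking Laplace transforms (using that the transform of $t\oF(t)$ is $-\hat\oF'(s)$, with $\hat\oF$ the transform of $\oF$) yields the Riccati equation $\hat\oF(s)^2 = -\hat\oF'(s)$, whose only positive decreasing solutions are $\hat\oF(s) = 1/(s+\lambda)$ with $\lambda\in(0,\infty)$ (the conventions $\oF(0)>0$ and $\oF(r)<1$ for $r>0$ forcing $\lambda$ to be finite and positive); inverting, $\oF(t) = e^{-\lambda t}$, i.e., $\TT = \Exp(\lambda)$.
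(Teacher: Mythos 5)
Your proposal is correct and matches the paper's proof in strategy: both establish $\oF^{\Exp(\mu)}(t)=\sum_{n\geq 0}e^{-\mu t}\frac{(\mu t)^n}{n!}b_n(t)$ via the order-statistics property of the homogeneous Poisson process, obtain (ii)$\Rightarrow$(iii) by extracting the first-order term in $\mu$, reduce (iii)$\Rightarrow$(i) to the monotonicity $b_n\leq b_{n-1}$, and identify the exponential law via the Laplace-transform Riccati equation. The only minor difference is in how the inductive step $b_n\leq b_{n-1}$ is organized: the paper conditions on the $(n-1)$-point configuration, sums over which gap the $n$-th uniform lands in, applies (iii) gap by gap, and then uses $\sum_j V_j^{n-1}=t$ to close the sum, whereas you fix the last two spacings, collapse them, and identify the collapsed law as Dirichlet$(1,\ldots,1,2)$ with Radon--Nikodym derivative $nT_{n-1}/t$ against the uniform simplex, using exchangeability to finish; these are two phrasings of the same combinatorial fact (removing a point $=$ merging its two neighboring spacings), so the argument is essentially the paper's.
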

\begin{example}
Let $\oF=\frac{1}{2}\mathbbm{1}_{[0,1)}+\frac{1}{4}\mathbbm{1}_{[1,\frac{3}{2})}+\frac{1}{6}\mathbbm{1}_{[\frac{3}{2},\infty)}$. Using the above equivalent conditions it is then elementary (if tedious) to check that $\TT$ is no bigger under exponential reset, but it is not no bigger under reset (a little thought reveals that the fact that $\TT$ has atoms is not crucial, it just simplifies the computations). 
\end{example}
\begin{remark}
We see that in order for $\TT$ to be no smaller under exponential reset it is necessary that $\oF(0)=1$.
\end{remark}
\begin{proof}
We leave the ``(resp. /\ldots/)'' parts to the reader. Let $\mu\in (0,\infty)$, set $\RR:=\Exp(\mu)$, and assume the setting and notation of Definition~\ref{definition:law-reset-by}. Remark that $\oF(0)=\oF^\RR(0)$. Define $S_n:=R_1+\cdots+R_n$ for $n\in \mathbb{N}_0$; $S_n$, $n\in \mathbb{N}$, are the arrival times of a homogenous Poisson process $N=(N_u)_{u\in [0,\infty)}$ of intensity $\mu$. We then see from \eqref{eq:stochastic} that, for any given $t\in (0,\infty)$, 
\begin{align}
\PP(\tT>t)&=\sum_{k=0}^\infty \EE\big[\oF(R_1)\cdots\oF(R_k)\oF(t-S_k);N_t=k]\nonumber\\
&=\sum_{k=0}^\infty \EE\big[\oF(R_1)\cdots\oF(R_k)\oF(t-S_k)\vert N_t=k]\frac{(\mu t)^k}{k!}e^{-\mu t}\label{exp-reset}
\end{align}
If now this is $\leq \oF(t)$ for all sufficiently small $\mu\in (0,\infty)$, then subtracting $\oF(t)e^{-\mu t}$ in this inequality, dividing by $\mu t$ and letting $\mu\downarrow 0$, we find that $\EE[\oF(R_1)\oF(t-R_1)\vert N_t=1]\leq \oF(t)$. Since, conditionally on $\{N_t=1\}$, $R_1$ is uniform on $[0,t]$, we conclude that $\frac{1}{t}\int_0^t\oF(u)\oF(t-u)du\leq \oF(t)$. Conversely, suppose the preceding inequality obtains for all $t\in (0,\infty)$. Let $t\in (0,\infty)$, $k\in \mathbb{N}_{\geq 2}$, and let, on some probability space $(\Omega,\FF,\PP)$,  $U_1,\ldots,U_k$ be independent uniformly on $[0,t]$ distributed  random variables. Let $U_{(0)}^k,U_{(1)}^k,\ldots,U_{(k)}^k,U^{k}_{(k+1)}$ be the order statistics of $0,U_1,\ldots,U_k,t$, and set $V_n^k:=U_{(n)}^k-U_{(n-1)}^k$ for $n\in \{1,\ldots,k+1\}$. In words, $V_1^k,\ldots,V_{k+1}^k$ are the ``spaces'' between the random variables $0,U_1,\ldots,U_k,t$. Analogous quantitities are introduced for $0,U_1,\ldots, U_{k-1},t$. Then \footnotesize
\begin{align}
&\EE[\oF(V_1^k)\ldots \oF(V_{k+1}^k)]\nonumber\\
&=\sum_{n=1}^k\EE[\oF(V^{k-1}_1)\cdots \oF(V^{k-1}_{n-1})\oF(U_k-U^{k-1}_{(n-1)})\oF(U^{k-1}_{(n)}-U_k)\oF(V^{k-1}_{n+1})\cdots \oF(V^{k-1}_k);U^{k-1}_{(n-1)}<U_k<U^{k-1}_{(n)}]\nonumber\\
&= \frac{1}{t}\sum_{n=1}^k\EE\left[\oF(V^{k-1}_1)\cdots \oF(V^{k-1}_{n-1}) \int_{U^{k-1}_{(n-1)}}^{U^{k-1}_{(n)}}\oF(u-U^{k-1}_{(n-1)})\oF(U^{k-1}_{(n)}-u)\dd u\oF(V^{k-1}_{n+1})\cdots \oF(V^{k-1}_k)\right]\nonumber\\
&\leq  \frac{1}{t}\sum_{n=1}^k\EE\left[\oF(V^{k-1}_1)\cdots \oF(V^{k-1}_{n-1})V^{k-1}_n\oF(V^{k-1}_n)\oF(V^{k-1}_{n+1})\cdots \oF(V^{k-1}_k)\right]\nonumber\\
&= \EE\left[\oF(V^{k-1}_1)\cdots \oF(V^{k-1}_k)\right].\label{exp-reset-auxiliary}
\end{align}\normalsize
An inductive argument allows to conclude that the latter is $\leq \oF(t)$. Therefore, by \eqref{exp-reset} and the order-statistics properties of homogeneous Poisson processes, we find that $\TT$ is no bigger under exponential reset. 

Assume now that $\int_0^t\oF(u)\oF(t-u)du=t\oF(t)$ for all $t\in [0,\infty)$. Taking Laplace transforms we find that, with $\hat{\oF}(\lambda):=\int_0^\infty e^{-\lambda t}\oF(t)\dd t$ for $\lambda\in [0,\infty)$, $\hat{\oF}^2+\hat{\oF}'=0$. Solving the differential equation we see that $\hat{\oF}(t)^{-1}-\hat{\oF}(u)^{-1}=t-u$ for all real $t\geq u\geq 0$. Letting $u\downarrow 0$ forces $\hat{\oF}(0)=\TT[\id]<\infty$, and then $\hat{\oF}$ and in turn $\oF$ may be identified, rendering $\TT\sim \Exp(\TT[\id]^{-1})$.
\end{proof}

\subsection{Reset and order in mean}
We turn out attention now to the behavior of $\TT$ under reset in expectation. Assume $m_0:=\TT[\id]<\infty$ and set $m(r):=\TT[\id-r\vert (r,\infty]]$ for $r\in [0,\sup\supp(\TT))$, $m(r):=0$ for $r\in [\sup\supp(\TT),\infty)$. Informally, $m(r)$ may be thought of as the mean residual lifetime of $\TT$ conditionally on it exceeding $r$ (it is only interesting while this can happen with positive probability, viz. for $r\in [0,\sup\supp(\TT))$; the values for the other $r$ are set for convenience). Note $m(0)\geq m_0$ with equality iff $\oF(0)=1$.

In view of its informal meaning, it should come as no surprise to the reader that the mean residual life map, $m$, will feature heavily in the characterizations of this subsection. The following lemma will prove useful, as it expresses $\oF$ in terms of $m$. 

\begin{lemma}\label{lemma}
The map $m:[0,\infty)\to [0,\infty)$ enjoys the following properties: it is locally bounded and right-continuous; it is locally bounded away from zero on $[0,\sup\supp(\TT))$; finally  $$\oF(r)=\frac{m_0}{m(r)}\exp\left(-\int_0^r\frac{\dd v}{m(v)}\right),\quad r\in [0,\sup\supp(\TT)).$$
\end{lemma}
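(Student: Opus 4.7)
My plan is to work with the auxiliary function $I(r):=\int_r^\infty \oF(u)\dd u$ for $r\in[0,\infty)$; since $m_0<\infty$ forces $\TT(\{\infty\})=0$, a Fubini/Tonelli rearrangement of $\int_{(r,\infty]}(u-r)\TT(\dd u)$ gives $m(r)\oF(r)=I(r)$ for $r\in[0,\sup\supp(\TT))$, and in particular $m_0=I(0)$. This identity will be the engine of everything that follows.

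First I would establish the regularity claims. For local boundedness on any compact $[0,a]$: if $a<\sup\supp(\TT)$ then $\oF(a)>0$ and $m(r)\le I(r)/\oF(a)\le m_0/\oF(a)$; if $a\ge\sup\supp(\TT)$ then $\oF$ vanishes on $[a,\infty)$ in the finite-support case and one gets $m(r)\le a-r$ directly from the monotonicity of $\oF$ together with $I(r)\le(a-r)\oF(r)$. For right-continuity on $[0,\sup\supp(\TT))$, $I$ is continuous (it is the tail of an absolutely convergent integral) and $\oF$ is right-continuous and strictly positive there, so $m=I/\oF$ is right-continuous; at $r=\sup\supp(\TT)$ and beyond, $m\equiv 0$ and a short case split handles the atom/non-atom possibilities. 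For the local lower bound on $[0,a]$ with $a<\sup\supp(\TT)$, I would exploit that $I$ is nonincreasing and $\oF\le 1$, giving $m(r)\ge I(a)>0$ uniformly in $r\in[0,a]$.

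For the integral representation, the key observation is that $I$ is absolutely continuous on $[0,\infty)$ with $I'=-\oF$ a.e.\ (in fact at every continuity point of $\oF$), and that on $[0,\sup\supp(\TT))$ we have $I>0$. Since $1/m$ is locally bounded on $[0,\sup\supp(\TT))$ (by the previous step), $\log I$ is absolutely continuous on every compact subinterval, and substituting $\oF=I/m$ gives $(\log I)'(r)=-\oF(r)/I(r)=-1/m(r)$ a.e. Integrating from $0$ to $r$ yields $I(r)=m_0\exp(-\int_0^r\dd v/m(v))$, and dividing by $m(r)$ delivers the claimed formula for $\oF(r)=I(r)/m(r)$.

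The only mildly delicate step is the ODE computation, where I must be sure that the manipulation $(\log I)'=I'/I$ is valid despite the fact that $\oF$ (and hence $I'$) is only defined a.e.; but absolute continuity of $I$, positivity of $I$ on $[0,\sup\supp(\TT))$, and local boundedness of $1/m$ together justify the application of the chain rule for absolutely continuous functions, so no serious obstacle arises. Everything else is essentially bookkeeping with tail integrals.
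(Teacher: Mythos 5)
Your argument is correct, and the first three regularity claims (local boundedness, right-continuity, local positivity), which the paper dismisses as not requiring proof, are handled cleanly by your reduction to properties of $I(r)=\int_r^\infty\oF(u)\,\dd u$ and $\oF$. Where you genuinely diverge from the paper is in the derivation of the formula $\oF(r)=\frac{m_0}{m(r)}\exp\!\big(-\int_0^r\frac{\dd v}{m(v)}\big)$. The paper first establishes the identity under the extra hypothesis that $\TT$ has a continuous density (so that $m(r)\oF(r)=I(r)$ can be differentiated classically and the resulting linear ODE solved), and then handles the general case by an approximation argument: replace $\TT$ by $\TT\star\Gamma(2,\lambda)$, which does admit a continuous density, apply the smooth case, and let $\lambda\to\infty$ using bounded convergence at continuity points of $\oF$. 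You instead work directly with the Lebesgue theory of absolutely continuous functions: $I$ is absolutely continuous with $I'=-\oF$ a.e., $I$ and $1/m$ are bounded above and below on compacts of $[0,\sup\supp(\TT))$, so $\log I$ is absolutely continuous there, the chain rule for AC functions gives $(\log I)'=-1/m$ a.e., and the fundamental theorem of calculus for AC functions integrates this up to the desired formula. Both approaches are sound; yours is shorter and avoids the justification of the passage to the limit (in particular the slightly delicate point in the paper of why convergence only at continuity points of $\oF$ suffices), at the cost of invoking somewhat heavier real-analysis machinery (the AC chain rule and the AC version of the fundamental theorem) that the paper's more hands-on smoothing argument circumvents.
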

\begin{remark}\label{remark:m}
If, conversely, we take, say, a map $n:[0,\infty)\to (0,\infty)$, continuously differentiable, with $n'\geq -1$, and bounded away from zero, then $[0,\infty)\ni r\mapsto \frac{n(0)}{n(r)}\exp\left(-\int_0^r\frac{\dd v}{n(v)}\right)$ is the tail function of an absolutely continuous law on the Borel sets of $[0,\infty]$, carried by $(0,\infty)$, supported by $[0,\infty]$, and such that $n$ is its mean residual life map.
\end{remark}
\begin{proof}
Only the last part expressing $\oF$ in terms of $m$ is not obvious.  To see why it too is true, note that, for $r\in [0,\sup\supp(\TT))$, we have $m(r)\oF(r)=\int_r^\infty\oF(v)\dd v$. If $\TT$ admits a continuous density  on $[0,\sup\supp(\TT))$ (w.r.t. Lebesgue measure, of course), then this is readily diffferentiated by the fundamental theorem of calculus, and solved for $\oF$. For the general case one may replace $\TT$ with $\TT\star \Gamma(2,\lambda)$ ($\star$ denotes convolution; the fact that  $\Gamma(2,\lambda)=\Exp(\lambda)\star \Exp(\lambda)$ admits a density that is continuous and vanishing at zero ensures that $\TT\star \Gamma(2,\lambda)$ admits a continuous density in turn) and pass to the limit $\lambda\to \infty$ by bounded convergence (grantedly only at all continuity points of $\oF$ from $[0,\sup\supp(\TT))$, but it is enough). 
\end{proof}

As in the previous subsection we identify first the laws that are rendered no smaller (no bigger, invariant) under arbitrary (or just deterministic) reset. Unsurprisingly, the condition is that $m$ is everywhere $\leq m_0$ ($\geq  m_0$,  $=m_0$). Still invariance is the same as being exponentially distributed.

\begin{proposition}\label{proposition:reset-in-mean}
The following statements are equivalent.
\begin{enumerate}[(i)]
\item $\TT$ is no bigger (resp. no smaller, invariant)  under reset in mean: $\TT^\RR[\id]\leq \TT[\id]$ (resp. $\geq \TT[\id]$, $=\TT[\id]$) for all reset laws $\RR$.
\item $\TT$ is no bigger (resp. no smaller, invariant)  under deterministic reset in mean: $\TT^{\delta_r}[\id]\leq \TT[\id]$ (resp. $\geq \TT[\id]$, $=\TT[\id]$) for all $r\in (0,\infty)$.
\item For all $r\in (0,\infty)$: $\TT[\id-r\vert (r,\infty]]\geq \TT[\id]$ (resp. $\leq \TT[\id]$, $=\TT[\id]$) with $\TT((r,\infty])>0$ implicit (resp. whenever $\TT((r,\infty])>0$, with  $\TT((r,\infty])>0$ implicit). 
\end{enumerate}
Furthermore, $m\equiv m_0$ iff  $\TT=\Exp(m_0^{-1})$.
\end{proposition}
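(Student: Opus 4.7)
My plan is to derive a single closed-form expression for $\TT^\RR[\id]$ in terms of $\oF$ and $m$, from which all the equivalences drop out at once, and then to read off the ``furthermore'' directly from the representation of $\oF$ supplied by Lemma~\ref{lemma}.

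First, adopting the setting of Definition~\ref{definition:law-reset-by}, I would decompose
\begin{equation*}
\tT=\sum_{k=1}^\infty \bigl(R_k\mathbbm{1}_{\{R_k<T_k\}}+T_k\mathbbm{1}_{\{T_k\leq R_k\}}\bigr)\mathbbm{1}_{\{R_j<T_j,\, j<k\}},
\end{equation*}
take expectations termwise by monotone convergence, and use independence to factor the prefix indicator as $\PP(R_1<T_1)^{k-1}$. Summing the resulting geometric series gives $\TT^\RR[\id]=(\EE[R\mathbbm{1}_{\{R<T\}}]+\EE[T\mathbbm{1}_{\{T\leq R\}}])/\PP(T\leq R)$. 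Substituting $\EE[T\mathbbm{1}_{\{T\leq r\}}]=m_0-(r+m(r))\oF(r)$ and $\EE[R\mathbbm{1}_{\{R<T\}}]=\int r\oF(r)\RR(\dd r)$ and cancelling the $r\oF(r)$ terms, I obtain
\begin{equation*}
\TT^\RR[\id]=\frac{m_0-\int m(r)\oF(r)\RR(\dd r)}{1-\int \oF(r)\RR(\dd r)}.
\end{equation*}

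With this identity in hand, $\TT^\RR[\id]\leq m_0$ is equivalent to $\int(m(r)-m_0)\oF(r)\RR(\dd r)\geq 0$, which is automatic whenever $m\geq m_0$ on $\{\oF>0\}$; specialising $\RR=\delta_r$ for each $r$ with $\oF(r)>0$ gives the converse and identifies the pointwise condition with the global one. The equivalence of (i), (ii) and (iii) in the ``no bigger'' case follows, and the ``no smaller'' and ``invariant'' cases are handled by flipping or combining the inequalities. For the furthermore: assuming $m\equiv m_0$, Lemma~\ref{lemma} yields $\oF(r)=\exp(-r/m_0)$ on $[0,\sup\supp(\TT))$; positivity of the exponential forces $\sup\supp(\TT)=\infty$, and hence $\TT=\Exp(m_0^{-1})$. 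The converse follows by standard memorylessness.

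The main technical nuisance I anticipate is integrability bookkeeping, since $\int r\oF(r)\RR(\dd r)$ or $\int m(r)\oF(r)\RR(\dd r)$ may be infinite. This is tamed by the inequality $m_0-m(r)\oF(r)\geq r\oF(r)\geq 0$ (which follows from $\EE[T\mathbbm{1}_{\{T\leq r\}}]\geq 0$), ensuring that every integrand in sight is nonnegative and that the displayed identity for $\TT^\RR[\id]$ makes sense in $[0,\infty]$, so that the comparisons with $m_0$ remain meaningful throughout.
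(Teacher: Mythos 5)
Your proof is correct and takes essentially the same route as the paper: both derive $\TT^\RR[\id]=\EE[T\land R]/\PP(T\le R)$ by summing a geometric series, clear the denominator so the resulting inequality is linear in $\RR$, specialise to $\RR=\delta_r$ to reduce to a pointwise condition on $m$, and invoke Lemma~\ref{lemma} for the ``furthermore''. The only difference is presentational: you substitute the mean-residual-life identity up front to obtain the tidy closed form $\TT^\RR[\id]=\bigl(m_0-\int m(r)\oF(r)\,\RR(\dd r)\bigr)/\bigl(1-\int\oF(r)\,\RR(\dd r)\bigr)$, whereas the paper manipulates the inequality $\EE[T\land r]\le\EE[T]\PP(T\le r)$ term-by-term.
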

\begin{remark}
In order for $\TT$ to be no smaller under reset in mean it is necessary that $\oF(0)=1$. 
\end{remark}
\begin{proof}
Let $\RR$ be a reset law and assume the setting and notation of Definition~\ref{definition:law-reset-by}. Then 
\begin{align}
\TT^\RR[\id]
&=\EE[T_1;T_1\leq R_1]+\sum_{k=2}^\infty (k-1)\EE[R_1;R_1<T_1]\PP(R_1<T_1)^{k-2}\PP(T_1\leq R_1)\nonumber\\
&\hspace{2cm}+\PP(R_1<T_1)^{k-1}\EE[T_1,T_1\leq R_1]\nonumber\\
&=\frac{\EE[T_1;T_1\leq R_1]+\EE[R_1;R_1<T_1]}{1-\PP(T_1>R_1)}
=\frac{\EE[T_1\land R_1]}{1-\PP(T_1>R_1)}.\label{eq:mean-stochastic}
\end{align}
Multiplying both sides by $1-\PP(T_1>R_1)$, we see that  $\TT^\RR[\id]\leq \TT[\id]$ (resp. $\TT^\RR[\id]\geq \TT[\id]$, $\TT^\RR[\id]= \TT[\id]$) for all reset laws $\RR$ iff it holds true for all $\RR$ of the form $\delta_r$, $r\in (0,\infty)$, which in turn is equivalent to, with $\Box$ standing for $\leq$ (resp. for $\geq$, $=$),
\begin{align}
\EE[T_1\land r]&\Box \EE[T_1](1-\PP(T_1>r))\nonumber\\
\Leftrightarrow \EE[T_1]\PP(T_1>r)&\Box \EE[T_1]-\EE[T_1\land r]\nonumber\\
\Leftrightarrow \EE[T_1]\PP(T_1>r)&\Box \EE[T_1\lor r]-r\nonumber\\
\Leftrightarrow \EE[T_1]\PP(T_1>r)&\Box \EE[T_1-r; T_1> r]\nonumber
\end{align}
for all $r\in (0,\infty)$. The final assertion follows from Lemma~\ref{lemma}.
\end{proof}
\begin{remark}\label{remark:checkin}
One may rewrite \eqref{eq:mean-stochastic} as $$\TT^\RR[\id]=\frac{\RR[\TT[\id\land \cdot]]}{\RR[F]}=\frac{\RR[\int_0^\cdot \overline{F}]}{\RR[F]}=\frac{\RR\left[\frac{\int_0^\cdot\overline{F}}{F}F\right]}{\RR[F]}.$$ Unraveling the definitions this agrees with the expression obtained in \cite[Eq.~(6)]{checkin}, and recovered in \cite[Eq.~(E.1)]{Villarroel}, which are given there assuming the existence of densities for $\RR$ and $\TT$. Now, the map $\frac{\int_0^\cdot\overline{F}}{F}:[0,\infty)\to [0,\infty)$ has the limit $m_0$ at $\infty$, $\sup \{\frac{\int_0^t\overline{F}(u)\dd u}{F(t)}:t\in (0,\infty)\}\geq \TT^\RR[\id]\geq \inf \{\frac{\int_0^t\overline{F}(u)\dd u}{F(t)}:t\in (0,\infty)\}$, and therefore plainly $\sup/\inf \TT^\RR[\id]$ over all reset laws $\RR$ is the same as $\sup/\inf_{r\in (0,\infty)}\TT^{\delta_r}[\id]$, and is in fact equal to  $\sup/\inf \{\frac{\int_0^t\overline{F}(u)\dd u}{F(t)}:t\in (0,\infty)\}$ (cf. the comments following \cite[Eq.~(6)]{checkin} \cite[Eq.~(E.1)]{Villarroel}). This yields, once some straightforward computations are made, an alternative proof to the equivalences of Proposition~\ref{proposition:reset-in-mean}.
\end{remark}
To conclude our study of stochastic reset without branching, we would like to express the property of being no bigger (no smaller, invariant) under exponential reset in terms of the residual mean function $m$, since this will best relate it to the findings of Proposition~\ref{proposition:reset-in-mean}. Below we set $\frac{1}{0}:=\infty$, $e^{-\infty}:=0$.
\begin{proposition}
For any given  $\mu\in (0,\infty)$, $\TT^{\Exp(\mu)}[\id]\leq \TT[\id]$ (resp. $\geq \TT[\id]$, $=\TT[\id]$) iff 
\begin{equation}\label{eq:mean-exp}
 \int_0^\infty e^{-\mu t-\int_0^t \frac{\dd u}{m(u)}}\dd t\geq \frac{1}{m_0^{-1}+\mu}\text{ (resp. $\leq \frac{1}{m_0^{-1}+\mu}$, $=\frac{1}{m_0^{-1}+\mu}$)}.
\end{equation}
In particular $\TT$ is invariant under exponential reset in mean (in the obvious meaning of this qualification) iff $\TT=\Exp(m_0^{-1})$.
\end{proposition}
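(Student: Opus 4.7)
The plan is to translate both sides of \eqref{eq:mean-exp} into conditions on the Laplace transform $\hat{\TT}(\mu):=\TT[e^{-\mu\cdot}]$ and then verify that the two reformulations agree algebraically. First I would specialize \eqref{eq:mean-stochastic} with $\RR=\Exp(\mu)$: with $T\sim\TT$ and $R\sim\Exp(\mu)$ independent, $\PP(T\leq R)=\EE[e^{-\mu T}]=\hat{\TT}(\mu)$ and, after a Fubini swap, $\EE[T\wedge R]=\int_0^\infty \oF(t)e^{-\mu t}\dd t=(1-\hat{\TT}(\mu))/\mu$, so
\[
\TT^{\Exp(\mu)}[\id]=\frac{1-\hat{\TT}(\mu)}{\mu\,\hat{\TT}(\mu)}.
\]
Clearing denominators, the inequality $\TT^{\Exp(\mu)}[\id]\leq m_0$ (resp.\ $\geq$, $=$) is equivalent to $\hat{\TT}(\mu)\geq (1+\mu m_0)^{-1}$ (resp.\ $\leq$, $=$).

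Second I would rewrite the integral in \eqref{eq:mean-exp} using Lemma~\ref{lemma}. For $t\in [0,\sup\supp(\TT))$ the lemma yields $e^{-\int_0^t\dd v/m(v)}=m(t)\oF(t)/m_0=(1/m_0)\int_t^\infty \oF(v)\dd v$, the last equality being the very definition of $m$; for $t\geq \sup\supp(\TT)$ the integrand in \eqref{eq:mean-exp} vanishes under the conventions $1/0:=\infty$, $e^{-\infty}:=0$, while also $\oF\equiv 0$ there because $m_0<\infty$ precludes any mass at $\infty$. Fubini and $\int_0^\infty \oF = m_0$ give
\[
\int_0^\infty e^{-\mu t-\int_0^t \dd v/m(v)}\dd t = \frac{1}{m_0\mu}\int_0^\infty \oF(v)(1-e^{-\mu v})\dd v = \frac{1}{\mu}-\frac{1-\hat{\TT}(\mu)}{\mu^2 m_0}.
\]
A brief algebraic simplification then produces
\[
\int_0^\infty e^{-\mu t-\int_0^t\dd v/m(v)}\dd t - \frac{1}{m_0^{-1}+\mu} = \frac{\hat{\TT}(\mu)(1+\mu m_0)-1}{\mu^2 m_0(1+\mu m_0)},
\]
which, combined with the first paragraph, establishes the three equivalences of the proposition.

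Finally, the invariance claim is immediate: if \eqref{eq:mean-exp} holds with equality for every $\mu\in (0,\infty)$, then $\hat{\TT}(\mu)=(1+\mu m_0)^{-1}$ for all $\mu>0$, which is the Laplace transform of $\Exp(m_0^{-1})$, forcing $\TT=\Exp(m_0^{-1})$ by uniqueness of the Laplace transform; the converse is a trivial check, since $m\equiv m_0$ reduces the integral to $\int_0^\infty e^{-(\mu+m_0^{-1})t}\dd t=(m_0^{-1}+\mu)^{-1}$. I do not foresee a substantive obstacle; the only care required is the boundary bookkeeping when $\sup\supp(\TT)$ is finite and, if present, has an atom of $\TT$, and justifying the Fubini interchange (trivial since all integrands are nonnegative).
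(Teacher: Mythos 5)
Your proof is correct and follows essentially the same route as the paper: you specialize \eqref{eq:mean-stochastic} to $\RR=\Exp(\mu)$ to obtain the Laplace-transform criterion $(1+\mu m_0)\hat{\TT}(\mu)\geq 1$ and then invoke Lemma~\ref{lemma} to convert the integral in \eqref{eq:mean-exp} into Laplace-transform form. The only cosmetic difference is that you close the gap with a Fubini interchange and an explicit algebraic identity, whereas the paper uses the equivalent integration by parts $\int_0^{t_0} e^{-\mu r-\int_0^r \dd v/m(v)}\frac{\dd r}{m(r)}=1-\mu\int_0^{t_0} e^{-\mu r-\int_0^r \dd v/m(v)}\dd r$.
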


\begin{remark}
In the context of Remark~\ref{remark:m}:
\begin{itemize}
\item As long as $\int_0^t \frac{\dd z}{n(z)}\leq \frac{t}{n(0)}$  for all $t\in (0,\infty)$ and  $n(z)<n(0)$ for some $z\in (0,\infty)$ (which may happen), then  $\TT$ is no bigger under exponential reset in mean, yet $\TT$ is not no bigger under reset in mean.
\item One can take $n$ non-constant and $n\geq n(0)$, in which case $\TT$ is no bigger under reset in mean, but it is not invariant under reset in mean.
\end{itemize}
\end{remark}
\begin{remark}\label{remark:physicisits}
It was essentially observed in \citep[Eq. (4)]{reuveni} (and it also follows from the considerations of the proof below) that  [$\TT^{\Exp(\mu)}[\id]< \TT[\id]$ for all sufficiently small $\mu\in (0,\infty)$] if  $\TT[\id^2]> 2\TT[\id]^2$ and only if $\TT[\id^2]\geq 2\TT[\id]^2$.
\end{remark}
\begin{example}
Let $k\in (0,1)$ and  $\oF(t)=(t+k)^{-2}$ for $t\in  [0,\infty)$. Then $m_0=\TT[\id]=k^{-1}$ and $\TT[\id^2]=\infty$, so the (sufficient) condition of Remark~\ref{remark:physicisits} is trivially met. On the other hand, $m(t)=t+k$ for $t\in [0,\infty)$, and one checks easily that  condition \eqref{eq:mean-exp} fails to hold for all $\mu \in (0,\infty)$, so that $\TT$ is not no bigger under exponential reset in mean. 
\end{example}
\begin{remark}
Beyond ``universally qualifying'' over $\mu\in (0,\infty)$ in \eqref{eq:mean-exp} apparently little (useful) can be said in the direction of characterizing $\TT$ to be no bigger (smaller) under exponential reset in mean.
\end{remark}
\begin{proof}
Set $\RR:=\Exp(\mu)$. Then, in the notation and setting of Definition~\ref{definition:law-reset-by}, $\TT^\RR[\id]\leq \TT[\id]$ is equivalent to (cf. \eqref{eq:mean-stochastic}) $$(1+\mu \EE[T_1])\EE[e^{-\mu T_1}]\geq 1.$$
 Setting $t_0:=\sup\supp(\TT)$, a simple computation, using $\EE[1-e^{-\mu T_1}]=\int_0^\infty \mu e^{-\mu r}\oF(r)\dd r$ and Lemma~\ref{lemma},  reveals that   the latter is further equivalent to
$$
\frac{1}{1+\mu\EE[T_1]}\geq \int_0^{t_0} e^{-\mu r-\int_0^r\frac{\dd v}{m(v)}}\frac{\dd r}{m(r)}.
$$
Since $\int_0^{t_0} e^{-\mu r-\int_0^r\frac{\dd v}{m(v)}}\frac{\dd r}{m(r)}=1-\mu\int_0^{t_0} e^{-\mu r-\int_0^r\frac{\dd v}{m(v)}}\dd r$ we obtain  \eqref{eq:mean-exp}. The claim for the reverse inequality and equality is analogous. The final assertion is then immediate.
\end{proof}

\begin{remark}
We may rewrite \eqref{eq:mean-exp} as $\int_0^\infty (1-\frac{m_0}{m(t)})e^{-\mu t-\int_0^t \frac{\dd u}{m(u)}}\dd t\geq 0$ (resp. $\leq 0$, $=0$). Hence in order for $\TT$ to be no smaller under exponential reset in mean (in the obvious meaning of this qualification), it is necessary that $\oF(0)=1$, at least if $m$ is bounded away from $0$ (we may multiply the indicated inequality by $\mu$ and pass to the limit $\mu\to \infty$).
\end{remark}

It seems plain that being no bigger under reset in mean cannot in general imply the same without the qualification ``in mean''. Still we make an example in which this occurs explicit. 
\begin{example}
Let $\oF(t)=e^{-t-0.1}\mathbbm{1}_{[0,1)}(t)-e^{-t-0.25}\mathbbm{1}_{[1,\infty)}(t)$ for $t\in [0,\infty)$. Using the above characterizations, it is then an elementary exercise to verify that $\TT$ is no bigger under reset in mean, but it is not no bigger under exponential reset.
\end{example}

\section{Restart with branching and stochastic order}\label{section:branching}
Retain the setting of the start of Section~\ref{section:restart}.

\begin{definition}[$l$-fold reset, $l\in \mathbb{N}$]\label{definition:law-reset-l-fold}
Let $\RR$ be a reset law and $l\in \mathbb{N}$. We define a new probability law, $\TT^\RR_l$, on the Borel sets of $[0,\infty]$ as follows. Let $(T^k_i)_{(k,i)\in \mathbb{N}^2,i\leq l^{k-1}}$ be an array of $[0,\infty]$-valued i.i.-with law $\TT$-d. random variables and let $(R_k)_{k\in \mathbb{N}}$ be an independent sequence of $[0,\infty]$-valued i.i.-with law $\RR$-d. random variables, all defined on a common probability space $(\Omega,\FF,\PP)$. Then $\TT^\RR_l$ is the law of the random time $\tilde{T}:\Omega\to [0,\infty]$ specified a.s. by \footnotesize$$\tT=R_1+\cdots+R_{k-1}+T^k_{1}\land \cdots \land T^k_{l^{k-1}}\text{ on }\{R_1<T_1^1,\ldots,R_{k-1}<T_{1}^{k-1} \land\cdots\land T^{k-1}_{l^{k-2}},T^k_{1}\land \cdots \land T^k_{l^{k-1}}\leq R_k\},\quad k\in \mathbb{N}.$$\normalsize
We denote by $\oF^\RR_l$ the tail function of $\TT^\RR_l$. 
\end{definition}

\begin{remark}
 Note, $\TT^\RR_1=\TT^\RR$. 
\end{remark}
In parallel to the results on reset without branching of the previous section we have 
\begin{proposition} \label{proposition:l-fold}
Let $l\in \mathbb{N}_{\geq 2}$. We have the following assertions (in their by now obvious meaning). 
\begin{enumerate}[(1)]
\item\label{l-fold:1} $\TT$ is no bigger under $l$-fold reset iff it is no  bigger under deterministic $l$-fold reset, which occurs iff $\oF(x+y)\geq \oF(x)\oF(y)^l$ whenever $\{x,y\}\subset [0,\infty)$. $\TT$ cannot be invariant under deterministic $l$-fold reset. 
\item\label{l-fold:2}  $\TT$ is no bigger under $l$-fold exponential reset iff $\oF^{\Exp(\mu)}_l(t)\leq \oF(t)$ for all sufficiently small $\mu\in (0,\infty)$ for all $t\in [0,\infty)$, which occurs iff $\frac{1}{t}\int_0^t\oF(u)\oF(t-u)^l\dd u\leq \oF(t)$ for all $t\in (0,\infty)$.
\end{enumerate}
\end{proposition}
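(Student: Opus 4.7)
My plan is to follow the structure of the proofs of Propositions~\ref{proposition} and~\ref{proposition:exponential}, adapting the bookkeeping to $l$-fold branching. For part~(\ref{l-fold:1}), I first compute $\oF_l^{\delta_r}(t)$ directly from Definition~\ref{definition:law-reset-l-fold}: since cycle $j$ carries $l^{j-1}$ independent $\TT$-copies, the probability that its minimum exceeds $r$ is $\oF(r)^{l^{j-1}}$, and a geometric-sum calculation parallel to~\eqref{eq:deterministic} yields $\oF_l^{\delta_r}(t)=\oF(r)^{(l^k-1)/(l-1)}\oF(t-kr)^{l^k}$ for $t\in[kr,(k+1)r)$. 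Setting $k=1$, $r=x$, $t=x+y$ with $y\in[0,x)$ gives $\oF(x+y)\geq\oF(x)\oF(y)^l$ in that range; right-continuity extends this to $y\leq x$, and swapping the roles of $x,y$ gives the inequality in the range $y\geq x$ as $\oF(x+y)\geq\oF(y)\oF(x)^l$. Since $\oF\in[0,1]$ and $l\geq 2$, the elementary estimate $\oF(y)\oF(x)^l\geq\oF(x)\oF(y)^l$ for $y\geq x$ combines the two into $\oF(x+y)\geq\oF(x)\oF(y)^l$ for all $x,y\geq 0$. The converse (condition $\Longrightarrow$ no bigger under arbitrary $l$-fold reset) adapts~\eqref{eq:stochastic}; the iterated bound $\oF(R_1)\oF(R_2)^l\cdots\oF(R_{k-1})^{l^{k-2}}\oF(t-S_{k-1})^{l^{k-1}}\leq\oF(t)$ follows by induction, using $\oF\leq 1$ at each step to absorb the growing exponents $l^j\geq l$. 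For the no-invariance assertion, if $\oF(x+y)=\oF(x)\oF(y)^l$ held for $y\leq x$ (symmetrically for $y\geq x$), then writing $g:=-\log\oF$ and comparing the decompositions $(s/2,s/2)$ --- iterating to $g(s)=(l+1)^2 g(s/4)$ --- and $(3s/4,s/4)$ together with the further split $3s/4=s/2+s/4$ --- giving $g(s)=(3l+1)g(s/4)$ --- forces $(l+1)^2=3l+1$, i.e., $l(l-1)=0$, contradicting $l\geq 2$ unless $g\equiv 0$, which is precluded by the standing assumption $\oF(r)<1$ for $r>0$.

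For part~(\ref{l-fold:2}), the Poisson decomposition of the reset times produces
\[
\oF_l^{\Exp(\mu)}(t) = \sum_{k=0}^\infty \EE\!\left[\prod_{j=1}^{k+1}\oF(V_j^k)^{l^{j-1}}\right]\frac{(\mu t)^k}{k!}e^{-\mu t},
\]
with $V_1^k,\ldots,V_{k+1}^k$ the spacings of $k$ i.i.d.\ uniforms on $[0,t]$ (conditional on $N_t=k$, as in the proof of Proposition~\ref{proposition:exponential}). The ``$\Longrightarrow$'' direction proceeds exactly as there: subtract the $k=0$ term, divide by $\mu t$, let $\mu\downarrow 0$, and the $k=1$ contribution is isolated as $\frac{1}{t}\int_0^t\oF(u)\oF(t-u)^l\dd u$, using that $R_1$ conditional on $N_t=1$ is uniform on $[0,t]$.

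The main obstacle is the converse: showing that the integral condition implies $f_k(t):=\EE[\prod_{j=1}^{k+1}\oF(V_j^k)^{l^{j-1}}]\leq\oF(t)$ for every $k\geq 1$. My plan is an induction on $k$ modeled on~\eqref{exp-reset-auxiliary}: integrate out one of the uniforms (say $U_k$), apply the hypothesis to the resulting convolution integral over the split spacing, and sum over insertion positions via $\sum_{n=1}^k V_n^{k-1}/t=1$. The complication beyond the non-branching case is that, after collapsing, the natural integrand is $\oF(w)^{l^{n-1}}\oF(s-w)^{l^n}$ rather than $\oF(w)\oF(s-w)^l$, and the exponents on the non-collapsed spacings shift up to $l^i$ from the desired $l^{i-1}$. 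I would handle both by the monotonicity $\oF^a\leq\oF^b$ (for $a\geq b$, $\oF\in[0,1]$): bound $\oF(w)^{l^{n-1}}\oF(s-w)^{l^n}\leq\oF(w)\oF(s-w)^l$ to reduce the collapsed integral to the hypothesis, and $\oF(V_i^{k-1})^{l^i}\leq\oF(V_i^{k-1})^{l^{i-1}}$ on the remaining spacings to restore the $(k-1)$-level exponent pattern. The delicate exponent bookkeeping is where I expect the real technical difficulty to lie; once it closes the induction, summation against the Poisson weights yields $\oF_l^{\Exp(\mu)}(t)\leq\oF(t)$ for every $\mu>0$.
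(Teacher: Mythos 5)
Your Part~(\ref{l-fold:1}) argument is essentially the paper's, with the welcome extra observation that the asymmetric inequality $\oF(y)\oF(x)^l\geq\oF(x)\oF(y)^l$ for $y\geq x$ (a consequence of $\oF$ nonincreasing and $l\geq 2$) lets you conflate the two ranges; and your alternative route to the non-invariance contradiction via the splits $(s/2,s/2)$ and $(3s/4,s/4)$ reaches the same equation $(l+1)^2=3l+1$ that the paper reaches by iterating $\oF(na)=\oF(a)^{(n-1)l+1}$, so the two are interchangeable. The ``$\Longrightarrow$'' direction in Part~(\ref{l-fold:2}) also matches.

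The ``$\Longleftarrow$'' direction of Part~(\ref{l-fold:2}) is where the proposal has a genuine gap, and it is not merely delicate bookkeeping: your proposed bounds actively overshoot, so the inductive step does not close. After integrating out $U_k$ and applying your two monotonicity bounds ($\oF(w)^{l^{n-1}}\oF(s-w)^{l^n}\leq\oF(w)\oF(s-w)^l$ followed by the hypothesis, and $\oF(V_j^{k-1})^{l^j}\leq\oF(V_j^{k-1})^{l^{j-1}}$ for $j>n$), slot $n$ ends up carrying $V_n^{k-1}\oF(V_n^{k-1})^1$ rather than the $V_n^{k-1}\oF(V_n^{k-1})^{l^{n-1}}$ that would be needed to sum to $f_{k-1}(t)$. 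Since $\oF\leq 1$ and $l^{n-1}\geq 1$, one has $\oF(V_n^{k-1})^1\geq\oF(V_n^{k-1})^{l^{n-1}}$, which is the \emph{wrong} direction for an upper bound. Dividing the putative inequality by $\prod_j\oF(V_j^{k-1})^{l^{j-1}}$ reduces the claimed step to
\[
\sum_{n=1}^k\frac{V_n^{k-1}}{t}\,\oF(V_n^{k-1})^{1-l^{n-1}}\leq 1,
\]
yet the left side is $\geq\sum_n V_n^{k-1}/t=1$, with equality only in degenerate cases. Concretely for $k=2$, $l=2$: after your bounding, one finds the upper bound minus $f_1(t)$ equals $\tfrac{1}{t}\int_0^t(t-w)\oF(w)\oF(t-w)\bigl(1-\oF(t-w)\bigr)\dd w\geq 0$, so your bound on $f_2(t)$ is strictly larger than $f_1(t)$ whenever $\oF$ is nondegenerate on $(0,t)$. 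The underlying obstruction is that the collapsed slot really needs the hypothesis applied to $G:=\oF^{l^{n-1}}$, i.e. $\tfrac{1}{s}\int_0^s G(u)G(s-u)^l\dd u\leq G(s)$, but the stated integral hypothesis for $\oF$ does \emph{not} imply the same for $\oF^p$ with $p>1$ (the power-mean inequality runs the opposite way), so no ``crude'' exponent-reduction can repair the step. The paper itself asserts only a ``simple modification'' of the non-branching argument without giving it; whatever that modification is, it cannot be the one you propose, and you should produce an argument that keeps the exponent on the collapsed slot intact before regarding Part~(\ref{l-fold:2}) as proved.
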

\begin{proof}
We may refrain from reporting all the details of the computations, because the ground is already familiar to us from the case of reset without branching. 

\ref{l-fold:1}. Suppose first $\oF(x+y)\geq \oF(x)\oF(y)^l$ whenever $\{x,y\}\subset [0,\infty)$. Then, similarly as in \eqref{eq:stochastic} except that now in the setting and notation of Definition~\ref{definition:law-reset-l-fold}, 
 \begin{align}
\PP(\tT>t)&= \sum_{k=1}^\infty \EE\big[\oF(R_1)\cdots \oF(R_{k-1})^{l^{k-2}}(\oF(t-R_1-\cdots-R_{k-1})^{l^{k-1}}\mathbbm{1}_{\{R_1+\cdots +R_{k-1}\leq t\}}\nonumber\\
&\hspace{5cm}+\mathbbm{1}_{\{R_1+\cdots+ R_{k-1}> t\}}-\oF(R_k)^{l^{k-1}});R_1+\cdots +R_k> t\big]\nonumber\\
&\leq   \sum_{k=1}^\infty \EE\big[\oF(t)\mathbbm{1}_{\{R_1+\cdots +R_{k-1}\leq t\}}+\oF(R_1)\cdots \oF(R_{k-1})^{l^{k-2}}\nonumber\\
&\hspace{4cm}(\mathbbm{1}_{\{R_1+\cdots+ R_{k-1}> t\}}-\oF(R_k)^{l^{k-1}});R_1+\cdots +R_k> t\big]= \oF(t).\label{eq:l-fold}
\end{align}
Conversely, suppose  that $\TT$ is no bigger under $l$-fold deterministic reset. 
Then essentially exactly the same procedure as the one surrounding \eqref{eq:deterministic} shows that indeed $\oF(x+y)\geq \oF(x)\oF(y)^l$ whenever $\{x,y\}\subset [0,\infty)$, $y\leq x$ (hence also if $y>x$), and that furthermore, if $\TT$ is even invariant under $l$-fold deterministic reset, then there is equality in the preceding (albeit only for $y\leq x$). But if the latter prevails, then for all $a\in (0,\infty)$, $n\in \mathbb{N}$, we have $\oF(na)=\oF(a)^{(n-1)l+1}$. In consequence $\oF(\frac{m}{n})=\oF(1)^{\frac{(m-1)l+1}{(n-1)l+1}}$, $\{m,n\}\subset \mathbb{N}$, which cannot be (e.g. plug in $m=1$ \& $n=2$, and then $m=2$ \& $n=4$; note, $l\ne 1$ and $\oF(1)<1$).

\ref{l-fold:2}. Let next $\mu\in (0,\infty)$, set $\RR:=\Exp(\mu)$, and assume the setting of Definition~\ref{definition:law-reset-l-fold}. Define $S_n$, $n\in \mathbb{N}_0$, and $N$ as in the proof of Proposition~\ref{proposition:exponential}. From \eqref{eq:l-fold}, for any given $t\in (0,\infty)$, 
\begin{equation}
\PP(\tT>t)=\sum_{k=0}^\infty \EE\big[\oF(R_1)\cdots\oF(R_k)^{l^{k-1}}\oF(t-S_k)^{l^k}\vert N_t=k]\frac{(\mu t)^k}{k!}e^{-\mu t}\label{exp-reset:l-fold}
\end{equation}
Therefore, a simple modification of the argument of the proof of Proposition~\ref{proposition:exponential} establishes the asserted characterization of when $\TT$ is no bigger under $l$-fold exponential reset.
\end{proof}
\begin{remark}
Let $l\in \mathbb{N}_{\geq 2}$ and let $\RR$ be a reset law. It follows from \eqref{eq:l-fold} by integrating the survival function, that 
\footnotesize
\begin{align*}
\TT^\RR_l[\id]&=\sum_{k=1}^\infty\int_0^\infty \EE\big[\oF(R_1)\cdots \oF(R_{k-1})^{l^{k-2}}\oF(t-R_1-\cdots-R_{k-1})^{l^{k-1}};R_1+\cdots+R_{k-1}\leq t<R_1+\cdots+R_k]\dd t\\
&=\sum_{k=1}^\infty \EE[\oF(R_1)]\cdots \EE[\oF(R_1)^{l^{k-2}}]\EE\left[\int_0^{R_1}\oF(u)^{l^{k-1}}\dd u\right].
\end{align*}\normalsize
If $R_1\sim \Exp(\mu)$ for a $\mu\in (0,\infty)$, then this simplifies to $$\TT^\RR_l[\id]=\mu^{-1}\sum_{k=0}^\infty\prod_{n=0}^k(1-L^{(l^n)}_\TT(\mu)),$$
where for $m\in \mathbb{N}$, $L^{(m)}_\TT$ is the Laplace transform of the minimum of $m$ i.i.-according to the law $\TT$-d. random variables. If $R_1\sim \delta_r$ for an $r\in (0,\infty)$, then it simplifies to $$\TT^\RR_l[\id]=\sum_{k=0}^\infty \oF(r)^{\frac{l^k-1}{l-1}}\int_0^r\oF(u)^{l^{k}}\dd u. $$
But, unlike in the case of reset without branching, neither of these expresssions seems particularly amenable to further analysis in the direction of establishing a ``nice'' explicit condition for when $\TT$ is no bigger/is invariant under (deterministic, exponential) $l$-fold reset in mean. See however \citep[Eq. (5)]{pal} for a condition on when $\TT^{\Exp(\mu)}_l[\id]<\TT[\id]$ for all sufficiently small $\mu\in (0,\infty)$.
\end{remark}

\begin{remark}
Let $l\in \mathbb{N}_{\geq 2}$. An inspection of the proof of item \ref{l-fold:2} above  reveals that  if $\TT$ is to be invariant under $l$-fold exponential reset, then necessarily $\int_0^t\oF(u)^l\oF(t-u)\dd u=t \oF(t)$ for all $t\in (0,\infty)$. In view of Proposition~\ref{proposition:l-fold}~\ref{l-fold:1}, and also the results concerning reset without branching, it seems reasonable to conjecture that this cannot occur, but the author was not able to prove this. (It is immediate that only $\oF$ continuous with $\oF(0)=1$ can/could possibly verify the preceding relation.)
\end{remark}

\section{A parametric example: the Weibull class}\label{section:illustration}
The Weibull class of distributions (on $[0,\infty)$) belongs to the family of extreme value distributions and is one of the simplest and at the same time most widely used classes of lifetime distributions \cite[Section~7.1]{weibull}. It is indexed by only two parameters: the scale, which is inconsequential in our context, as it relates merely to a choice of measurement unit --- we set it equal to unity; and the shape $k\in (0,\infty)$. With this being so, the tail function $\overline{F}_{(k)}$ ($k$ indicating the shape) is given by $\overline{F}_{(k)}(x)=e^{-x^k}$ for $x\in [0,\infty)$; let $\mathcal{T}_{(k)}$ be the associated probability law. 

Simple computations using the conditions derived above reveal that  $\mathcal{T}_{(k)}$ is no bigger (resp. no smaller, invariant) under reset iff $k\leq 1$ (resp. $k\geq 1$, $k=1$), with the equivalence continuing to hold true if ``reset'' is replaced by ``deterministic reset'' or by ``exponential reset'' and/or if the qualification ``in mean'' is added. Furthermore, the condition for $\mathcal{T}_{(k)}$ to be no bigger under (deterministic) $l$-fold reset writes as $\lambda^k+l(1-\lambda)^k\geq 1$ for all $\lambda\in [0,1]$, which is again easily seen to be true iff $k\leq 1$ (consider $\lim_{\lambda\uparrow 1}\frac{1-\lambda^k}{(1-\lambda)^k}$).  Finally, the condition for $\mathcal{T}_{(k)}$ to be no bigger under exponential $l$-fold reset becomes $\int_0^1e^{-t^k(u^k+l(1-u)^k-1)}\dd u\leq 1$ for all $t\in (0,\infty)$, and it is still equivalent to $k\leq 1$ (for $k>1$, $u^k+l(1-u)^k-1<0$ for all $u$ sufficiently close to $1$, no matter how large the $l$, so that letting $t\to\infty$ monotone convergence precludes the stipulated inequality from holding true). 

In summary, within the Weibull class, the condition for being no bigger under reset, possibly only in mean, does not depend at all on whether we consider deterministic, exponential or arbitrary reset, and it also does not depend on the presence of branching, which is certainly not evident a priori. This condition, namely that $k\leq 1$, is even equivalent to the simple second-order condition of Remark~\ref{remark:physicisits} (i.e. to $\Gamma(1+2/k)\geq 2\Gamma(1+1/k)^2$). It is a reflection of the simple structure of the Weibull class: in general, as we have seen, most of the indicated equivalences will fail. 



\bibliographystyle{plainnat}
\bibliography{Stochastic-reset-biblio}

\begin{thebibliography}{15}
\providecommand{\natexlab}[1]{#1}
\providecommand{\url}[1]{\texttt{#1}}
\expandafter\ifx\csname urlstyle\endcsname\relax
  \providecommand{\doi}[1]{doi: #1}\else
  \providecommand{\doi}{doi: \begingroup \urlstyle{rm}\Url}\fi

\bibitem[Barlow et~al.(1996)Barlow, Proschan, and Hunter]{barlow}
R.~E. Barlow, F.~Proschan, and L.~C. Hunter.
\newblock \emph{Mathematical Theory of Reliability}.
\newblock Classics in Applied Mathematics. Society for Industrial and Applied
  Mathematics, 1996.

\bibitem[Chechkin and Sokolov(2018)]{checkin}
A.~Chechkin and I.~M. Sokolov.
\newblock Random search with resetting: A unified renewal approach.
\newblock \emph{Physical Review Letters}, 121:\penalty0 050601, 2018.

\bibitem[Deshpande et~al.(1986)Deshpande, Kochar, and Sing]{desphande}
J.~V. Deshpande, S.~C. Kochar, and H.~Sing.
\newblock Aspects of positive ageing.
\newblock \emph{Journal of Applied Probability}, 23\penalty0 (3):\penalty0
  748--758, 1986.

\bibitem[El-{N}eweihi(1981)]{el}
E.~El-{N}eweihi.
\newblock Stochastic ordering and a class of multivariate new better than used
  distributions.
\newblock \emph{Communications in Statistics - Theory and Methods}, 10\penalty0
  (16):\penalty0 1655--1672, 1981.

\bibitem[Eliazar(2017)]{Eliazar}
I.~Eliazar.
\newblock Branching search.
\newblock \emph{{EPL} (Europhysics Letters)}, 120\penalty0 (6):\penalty0 60008,
  2017.

\bibitem[Lapeyre~Jr. and Dentz(2019)]{unified}
G.~J. Lapeyre~Jr. and M.~Dentz.
\newblock Unified approach to reset processes and application to coupling
  between process and reset.
\newblock 2019.
\newblock arXiv:1903.08055v3.

\bibitem[Nofal(2012)]{nofal}
Z.~M. Nofal.
\newblock On the class of new better than used of life distributions.
\newblock \emph{Applied Mathematical Sciences}, 6\penalty0 (137):\penalty0
  6809--6817, 2012.

\bibitem[Pal and Reuveni(2017)]{pal-reuveni}
A.~Pal and S.~Reuveni.
\newblock First passage under restart.
\newblock \emph{Physical Review Letters}, 118:\penalty0 030603, 2017.

\bibitem[Pal et~al.(2019)Pal, Eliazar, and Reuveni]{pal}
A.~Pal, I.~Eliazar, and S.~Reuveni.
\newblock First passage under restart with branching.
\newblock \emph{Physical Review Letters}, 122:\penalty0 020602, 2019.

\bibitem[Rao and Damaraju(1992)]{rao}
B.~R. Rao and C.~V. Damaraju.
\newblock New better than used and other concepts for a class of life
  distributions.
\newblock \emph{Biometrical Journal}, 34\penalty0 (8):\penalty0 919--935, 1992.

\bibitem[Reuveni(2016)]{reuveni}
S.~Reuveni.
\newblock Optimal stochastic restart renders fluctuations in first passage
  times universal.
\newblock \emph{Physical Review Letters}, 116\penalty0 (17):\penalty0 170601,
  2016.

\bibitem[Rinne(2008)]{weibull}
H.~Rinne.
\newblock \emph{The Weibull Distribution: A Handbook}.
\newblock CRC Press, 2008.

\bibitem[Shaked and Shanthikumar(2007)]{reset}
M.~Shaked and J.~G. Shanthikumar.
\newblock \emph{Stochastic Orders}.
\newblock Springer Series in Statistics. Springer New York, 2007.

\bibitem[Villarroel and Montero(2018)]{Villarroel}
J.~Villarroel and M.~Montero.
\newblock Continuous-time ballistic process with random resets.
\newblock \emph{Journal of Statistical Mechanics: Theory and Experiment},
  2018\penalty0 (12):\penalty0 123204, 2018.

\bibitem[Weiss(1956)]{weiss}
G.~H. Weiss.
\newblock On the theory of replacement of machinery with a random failure time.
\newblock \emph{Naval Research Logistics Quarterly}, 3\penalty0 (4):\penalty0
  279--293, 1956.

\end{thebibliography}
\end{document}